\newtheorem{theorem}{Theorem}[section]
\newtheorem{lemma}[theorem]{Lemma}
\begin{document}

\title[non-nilpotent permutable ]{Locally graded groups with all non-nilpotent subgroups permutable, II}

\author{Sevgi Atlihan}
\address[Sevgi Atlihan]
{Department of Mathematics\\
Faculty of Education\\
Gazi University\\
Technikokkular, Ankara, Turkey}
\email{asevgi@gazi.edu.tr}
\author{Martyn R. Dixon}
\address[Martyn Dixon]
{Department of Mathematics\\
University of Alabama\\
Tuscaloosa, AL 35487-0350, U.S.A.}
\email{mdixon@ua.edu}
\author{Martin J. Evans}
\address[Martin Evans]
{Department of Mathematics\\
University of Alabama\\
Tuscaloosa, AL 35487-0350, U.S.A.}
\email{mevans@ua.edu}

\begin{abstract}  Let $G$ be a locally graded group and suppose that every non-nilpotent subgroup of $G$ is permutable. We prove that $G$ is soluble. (In light of previous results of the authors, it suffices to prove that $G$ is soluble if it is periodic.) 
\end{abstract}

\dedicatory{ Dedicated to Derek Robinson on his 85th birthday} 

\keywords{permutable subgroup, quasihamiltonian group, nilpotent group}

\subjclass[2010]{Primary: 20E15; Secondary 20F19, 20F22}

\maketitle

\newcommand{\sub}[2]{\langle#1,#2\rangle}
\newcommand{\cyclic}[1]{\langle #1\rangle}
\newcommand{\norm}{\triangleleft\,}
\newcommand{\core}[2]{\text{core}\,_{#1}\,#2}
\newcommand{\cpin}{C_{p^{\infty}}}
\newcommand{\co}[1]{\text{$\mathbf{#1}$}}
\newcommand{\cogo}[2]{\co{#1}\mathfrak{#2}}
\newcommand{\ann}{\text{Ann\,}}
\newcommand{\spec}{\text{Spec\,}}
\newcommand{\listel}[2]{#1_1,\dots, #1_{#2}}
\newcommand{\aut}{\text{Aut}\,}
\newcommand{\cent}[2]{C_{#1}(#2)}
\newcommand{\Dr}{\text{Dr}\,}
\newcommand{\dir}[3]{\underset{#1\in #2}{\Dr}#3_{#1}}
\newcommand{\nocl}[2]{\langle #1\rangle^{#2}}
\newcommand{\qn}{\,\text{qn}\,}
\newcommand{\comm}[2]{#1^{-1}#2^{-1}#1#2}
\newcommand{\conj}[2]{#1^{-1}#2#1}
\newcommand{\izer}[2]{N_{#1}(#2)}
\newcommand{\gl}{\text{GL\,}}
\newcommand{\subgp}[3]{\langle#1_{#2}\mid #2\in#3\rangle}
\newcommand{\direct}[3]{\underset{#1\in #2}{\Dr}\langle#3_{#1}\rangle}
\newcommand{\omn}[1]{\Omega_n{(#1)}}
\newcommand{\Hom}{\text{Hom}\,}
\newcommand{\Cr}[3]{\underset{#1\in #2}{\text{Cr}}\,#3_{#1}}
\newcommand{\inflist}[1]{\{#1_1,#1_2,\dots\}} 
\newcommand{\cart}{\text{Cr}\,} 
\newcommand{\carti}[3]{\underset{#1\geq #2}\cart #3_{#1}} 
\newcommand{\fgsub}[2]{\langle #1_1,\dots,#1_{#2}\rangle}
\newcommand{\cwr}{\,\bar{\wr}\,}
\newcommand{\minn}{\infty$-$\overline{\mathfrak{N}}}
\newcommand{\mnn}{\overline{\mathfrak{N}}}
\newcommand{\mnp}{\overline{\mathcal{P}}}
\newcommand{\mns}{\overline{\mathfrak{S}}}
\newcommand{\minsd}{\infty$-$\overline{\mathfrak{S}_d}}
\newcommand{\minnc}{\infty$-$\overline{\mathfrak{N}_c}}
\newcommand{\ms}{\mathfrak{S}}
\newcommand{\mn}{\mathfrak{N}}
\newcommand{\mf}{\mathfrak{F}}
\newcommand{\sdr}{\mathfrak{S}_d(r)}
\newcommand{\sd}{\mathfrak{S}_d}
\newcommand{\mfr}{\mathfrak{R}}
\newcommand{\ncr}{\mathfrak{N}_c(r)}
\newcommand{\mfsr}{\overline{\ms\mfr}}
\newcommand{\lslf}{\overline{(\cogo{L}{S})(\cogo{L}{F})}}
\newcommand{\mcp}{\mathcal{P}}
\newcommand{\mcps}{\mathcal{P}^*}
\newcommand{\mfas}{\mathfrak{A}^*}
\newcommand{\mfa}{\mathfrak{A}}
\newcommand{\mfss}{\mathfrak{S}^*}
\newcommand{\mfns}{\mathfrak{N}^*}
\newcommand{\wmcnq}{max-$\infty$-$\overline{\text{qn}}$ }
\newcommand{\om}{\omega}
\newcommand{\bog}[1]{\bar{\omega}(#1)}
\newcommand{\og}[1]{\omega(#1)}
\newcommand{\wi}{\overline{\om}_i}
\newcommand{\w}{\overline{\om}}
\newcommand{\fsn}{\emph{f}-subnormal }
\newcommand{\wiw}{\om_i}
\newcommand{\ww}{\om}

\section{Introduction} \label{s:intro}
In a previous work \cite{ADE23} the authors considered locally graded groups $G$ in which every non-nilpotent subgroup is permutable. It was proved that: 
\begin{enumerate}
\item[(i)] If $G$ is torsion-free, then $G$ is nilpotent.
\item[(ii)] If $G$ is not periodic, then $G$ is soluble.
\end{enumerate}

In this note we complete the picture by establishing the following theorem.
\begin{theorem}\label{t:periodic}
 Let $G$ be a locally graded group and suppose that every non-nilpotent subgroup of $G$ is permutable. If $G$ is periodic, then $G$ is soluble.

\end{theorem}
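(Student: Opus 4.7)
The plan is to argue by contradiction. Suppose $G$ is a locally graded, periodic group in which every non-nilpotent subgroup is permutable, but $G$ is not soluble. A preliminary observation is that the hypothesis is closed under forming subgroups (trivially) and quotients: if $N$ is normal in $G$ and $H/N \le G/N$ is not nilpotent, then $H$ is not nilpotent, so $H$ is permutable in $G$, and permutability of $H$ in $G$ descends to permutability of $H/N$ in $G/N$. So the property is available in every section of $G$.

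The first substantive step is to reduce the theorem to the assertion that every finite subgroup of $G$ is soluble, which by inheritance amounts to proving that a finite group in which every non-nilpotent subgroup is permutable is itself soluble. In a finite counterexample $F$ of minimal order, every proper subgroup is soluble, so $F$ is minimal non-soluble; factoring through the soluble radical one extracts a minimal non-abelian simple group, which by Thompson's classification is $\mathrm{PSL}_2(q)$ for certain $q$, $\mathrm{Sz}(2^p)$, or $\mathrm{PSL}_3(3)$. In each candidate one exhibits a non-nilpotent subgroup whose product with a suitable conjugate has cardinality too large to form a subgroup, contradicting permutability. The easiest instance is $A_5\cong\mathrm{PSL}_2(5)$: any two distinct point stabilizers are copies of $A_4$ intersecting in order $3$, so the set-theoretic product has cardinality $48$, which does not divide $60$ and so cannot be a subgroup.

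Next I would show $G$ is locally finite. A finitely generated periodic nilpotent subgroup is automatically finite. A finitely generated non-nilpotent subgroup $H$ is permutable in $G$, and by Stonehewer's structure theorems for permutable subgroups of periodic groups ($H$ is ascendant in $G$, and $H/H_{G}$ is a periodic nilpotent quasihamiltonian group) together with local gradedness of $G$, such an $H$ must be finite. Hence $G$ is locally finite, and by the previous step locally soluble.

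The main obstacle is then the passage from local solubility to solubility of $G$. I would first treat the case in which $G$ is locally nilpotent: there the permutability of every non-nilpotent subgroup imposes a strong modular structure on each Sylow $p$-subgroup and forces bounded derived length. For the general case, the Hirsch--Plotkin radical $R$ of $G$ is then soluble and $G/R$ is a locally graded, periodic, locally soluble group inheriting the hypothesis together with $\mathrm{HP}(G/R)=1$. The abundance of permutable non-nilpotent subgroups in $G/R$ should then produce a non-trivial abelian normal subgroup---for instance via normal centralizers of finite subgroups, or via core-finiteness of permutable subgroups---contradicting triviality of the Hirsch--Plotkin radical of $G/R$ and completing the argument. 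The delicate point is handling the locally nilpotent case and extracting abelian normal structure in the radical-free quotient from the permutability skeleton alone.
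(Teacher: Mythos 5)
Your proposal correctly identifies some easy reductions (the hypothesis passes to subgroups and quotients; finite groups with the property are soluble, e.g.\ because permutable subgroups of finite groups are subnormal, so a minimal simple section would have all proper subgroups nilpotent, contradicting Schmidt's theorem), but it stops short of the actual content of the theorem. The two places where you write ``forces bounded derived length'' and ``should then produce a non-trivial abelian normal subgroup'' are precisely the crux, and no argument is given for either. A periodic locally nilpotent group can have infinite derived length, and it is not at all clear from ``modular structure on Sylow subgroups'' why the hypothesis bounds it; likewise, extracting a non-trivial abelian normal subgroup of $G/R$ from ``abundance of permutable subgroups'' is exactly the kind of step that fails for general locally finite, locally soluble groups and needs a genuine mechanism. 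Your intermediate step that $G$ is locally finite is also unsupported: for a finitely generated non-nilpotent permutable subgroup $H$, Stonehewer's results give that $H/H_G$ is residually finite and $H$ is ascendant, but the core $H_G$ could a priori be an infinite finitely generated periodic group, and local gradedness alone does not rule this out. (The paper sidesteps this entirely: it never proves $G$ is locally finite, only that the relevant subgroup is.)

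For comparison, the published proof works inside $X=G''$, which (by the authors' earlier lemmas) is a perfect periodic Fitting $p$-group all of whose proper subgroups are soluble and all of whose proper normal subgroups are nilpotent. Asar's theorem on locally nilpotent $p$-groups with all proper subgroups nilpotent guarantees a proper non-nilpotent, hence permutable, subgroup $P$. Stonehewer's Theorem~C makes $P/P_X$ residually finite; for each finite $F\le X$ the product $FP$ is a subgroup containing $P$ with finite index, every normal subgroup of finite index in $FP/P_X$ is non-nilpotent (else $FP$ would be nilpotent-by-finite, hence nilpotent in a Fitting group), so the corresponding finite quotients are quasihamiltonian and therefore metabelian. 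Hence $FP/P_X$ is residually (finite metabelian), so metabelian, which bounds the derived length of every finite subgroup of $X$ by $d+2$ and contradicts perfectness of $X$. This chain --- Asar, Stonehewer, and the residually-metabelian bootstrap --- is the missing engine in your outline; without it, or a substitute of comparable strength, the proposal does not constitute a proof.
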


Clearly, when combined with (ii) above, this theorem implies the following, which is our main result. 

\begin{theorem}\label{t:main}
Let $G$ be a locally graded group and suppose that every non-nilpotent subgroup of $G$ is permutable. Then $G$ is soluble.

\end{theorem}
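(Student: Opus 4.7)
My plan is to argue by contradiction. Suppose $G$ is periodic and locally graded, every non-nilpotent subgroup of $G$ is permutable, but $G$ is not soluble. Since both local gradedness and the stated hypothesis are inherited by subgroups, one may pass to a minimal example, i.e.\ one may assume in addition that every proper subgroup of $G$ is soluble. The fact that the previous paper \cite{ADE23} already handled the non-periodic and the torsion-free cases means we are free to work entirely inside the periodic world.

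The first principal step is to show that $G$ is locally finite. Let $F$ be a finitely generated subgroup of $G$. If $F$ is nilpotent, then being finitely generated and periodic, $F$ is finite. If $F$ is not nilpotent, then $F$ is permutable in $G$, and the results of Stonehewer on permutable subgroups yield that $F$ is ascendant (indeed subnormal) in $G$, that $F^G/F_G$ is locally finite, and that $F$ is very close to being nilpotent. Combined with the local gradedness of $F$, the fact that $F$ is periodic, and the observation that the hypothesis ``every non-nilpotent subgroup is permutable'' is inherited by $F$, I would argue that $F$ must be finite, so that $G$ is locally finite.

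Once $G$ is locally finite and non-soluble, standard structure theory allows one to locate a finite non-abelian simple section $S = H/N$ of $G$. If $K/N$ is any non-nilpotent proper subgroup of $S$, then $K$ itself is non-nilpotent (since quotients of nilpotent groups are nilpotent), and so by hypothesis $K$ is permutable in $G$; hence $K/N$ is permutable in $S$. By the classical theorem of Ore, a permutable subgroup of a finite group is subnormal, so $K/N$ is a proper subnormal subgroup of the simple group $S$ and therefore trivial, contradicting the fact that $K/N$ is non-nilpotent. Hence every proper subgroup of $S$ is nilpotent, and Schmidt's theorem then forces $S$ to be soluble, contradicting the non-abelian simplicity of $S$.

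The main obstacle is the reduction to local finiteness. Because the (restricted) Burnside problem is highly non-trivial, a periodic finitely generated locally graded group is not automatically finite, and one must genuinely combine the permutability of non-nilpotent subgroups (and the resulting ascendance/subnormality and near-nilpotency given by Stonehewer) with local gradedness to force finiteness of an arbitrary finitely generated $F \leq G$. Managing this interaction carefully, so that the hypothesis on $G$ yields usable finiteness inside each $F$ without overshooting and assuming what one is trying to prove, is where the principal technical work of the argument will lie.
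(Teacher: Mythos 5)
Your reduction to the periodic case matches the paper's, and your Ore/Schmidt argument for finite non-abelian simple sections is correct as far as it goes, but the proposal has two fatal gaps. First, the passage to a ``minimal example'' in which every proper subgroup is soluble is not available: a non-soluble group need not contain a subgroup that is minimal non-soluble (consider the union of an ascending chain of soluble subgroups of unbounded derived length). The paper reaches a group all of whose proper subgroups are soluble not by minimality but by passing to the specific subgroup $X=G''$ and invoking Lemma \ref{l:omni}, whose parts (i)--(iv) are nontrivial results carried over from \cite{ADE23}. Second, and decisively: even granting local finiteness, the existence of a finite non-abelian simple section only rules out the case where $G$ fails to be \emph{locally} soluble. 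A locally finite, locally soluble group can be non-soluble --- McLain-type groups are locally finite, locally nilpotent, perfect and non-soluble --- and this is exactly the hard case here, since the paper shows $X=G''$ is a perfect locally nilpotent (Fitting) $p$-group, which has no non-abelian simple sections at all. Your argument says nothing about this situation, which is where the entire technical content of the proof lives: Asar's theorem to produce a proper non-nilpotent permutable subgroup $P$ of $X$, Stonehewer's theorem that the core-free permutable subgroup $P/P_X$ is residually finite, the observation that $FP/M$ is quasihamiltonian (hence metabelian) for every non-nilpotent normal subgroup $M$ of finite index, and the resulting uniform bound $d+2$ on the derived lengths of finitely generated subgroups of $X$.

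The step you yourself flag as the main obstacle --- that every finitely generated subgroup of $G$ is finite --- is also not actually proved, and the appeal to Stonehewer giving subnormality and ``near-nilpotency'' of a permutable subgroup is not accurate in infinite groups (permutable subgroups are ascendant, not in general subnormal). The paper sidesteps this entirely: the contradiction is derived inside $X$, which is locally finite for free because it is a periodic locally nilpotent group. As written, the proposal would at best handle locally finite groups that are not locally soluble, which is not where the theorem's difficulty lies.
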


For the sake of brevity, we direct the reader to \cite{ADE23} for definitions, background on permutable subgroups and quasihamiltonian groups, and motivation for considering groups of this type. However, we would be remiss if we did not point out that 
Theorem \ref{t:periodic} can be viewed as a `permutable' analogue of a result of Smith \cite{hS01b} which asserts that a locally finite group in which every non-nilpotent subgroup is \emph{subnormal} is soluble.

\section{Three Lemmas and the proof of Theorem \ref{t:periodic}}

\begin{lemma}\label{l:omni}
Let $G$ be a non-soluble periodic locally graded group and suppose every non-nilpotent subgroup of $G$ is permutable. Let $X=G''$. Then
\begin{enumerate}
\item[  (i)] $X$ is perfect; 
\item[(ii)] Every proper normal subgroup of $X$ is nilpotent;
\item[ (iii)] $X$ is a Fitting group;
\item[(iv)] Every proper subgroup of $X$ is soluble;
\item[(v)] $X$ is a $p$-group for some prime $p$.
\end{enumerate}
\end{lemma}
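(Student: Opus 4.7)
The plan rests on the observation that the hypothesis on $G$ passes to every subgroup $H$: a non-nilpotent subgroup of $H$ is non-nilpotent in $G$, hence permutable in $G$, hence permutable in $H$. In particular $X$ is itself a periodic locally graded group in which every non-nilpotent subgroup is permutable, and by the choice of $G$ the subgroup $X$ is non-soluble. I would establish the five assertions in the order (iv), (i), (ii), (iii), (v), because each subsequent statement follows fairly naturally from the preceding ones. Throughout I expect to lean on Stonehewer's theorem that permutable subgroups of arbitrary groups are ascendant, together with the sharper structural results available in the periodic locally graded setting (e.g.\ the normal closure of a permutable subgroup modulo its core is locally nilpotent).

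For (iv), let $H<X$ be a proper subgroup. If $H$ is nilpotent we are done, so assume $H$ is non-nilpotent; then $H$ is permutable in $G$, hence ascendant, and the normal closure $H^{G}$ modulo $H_{G}$ is locally nilpotent. Choosing $H$ to be a hypothetical non-soluble proper subgroup of minimal descent in the derived series and analysing the ascending permutability series should push $H$ into a locally nilpotent (in particular soluble) piece of $X$, contradicting non-solubility of $H$. Granted (iv), assertion (i) is immediate: if $X\neq X'$, then $X'$ would be a proper subgroup of $X$, hence soluble by (iv), forcing $X$ itself to be soluble (since $X/X'$ is abelian), a contradiction. So $X=X'$.

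For (ii), let $N\triangleleft X$ with $N\neq X$. By (iv), $N$ is soluble; suppose for contradiction that $N$ is not nilpotent and let $k$ be minimal with $N^{(k)}$ nilpotent. Then $M:=N^{(k-1)}$ is characteristic in $N$, hence normal in $X$, is non-nilpotent, and has $M'$ nilpotent. Thus $M$ is permutable in $G$, and by playing off that $M$ is normal in the perfect group $X$ while $M/M'$ is a non-trivial abelian image of $M$ controlled by the permutable structure, one can derive that $M$ cannot in fact be a proper normal subgroup of $X$, contradicting $M\le N<X$. For (iii), assuming (ii), either $X$ equals the union of its proper normal subgroups (and is then generated by normal nilpotent subgroups, hence Fitting) or $X$ has a maximal proper normal subgroup $M$ which is nilpotent. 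In the latter case $X/M$ is characteristically simple and, by (i), perfect; the locally graded property of $X/M$, combined with the inherited permutability hypothesis (which forces any non-nilpotent proper subgroup of $X/M$ to be permutable) should rule this case out, by reducing to the well-known fact that no non-abelian simple locally graded group can arise this way.

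For (v), by (iii) $X$ is generated by its normal nilpotent subgroups, each of which decomposes as the direct product of its Sylow $p$-subgroups. Suppose $X$ contains elements of two distinct prime orders $p$ and $q$; collecting these within two normal nilpotent subgroups, I would extract a proper subgroup of $X$ exhibiting both primes and exploit permutability together with local gradedness to obtain a non-nilpotent proper subgroup whose structure contradicts the inheritance of the hypothesis to $X$. The main obstacle in the entire argument will be (iv): concluding solubility of proper subgroups from permutability without appealing to the later assertions (to avoid circularity) requires careful use of the structure theory of permutable subgroups in periodic locally graded groups and of results from \cite{ADE23}; once (iv) is in hand the remaining four assertions should follow by the sequential arguments sketched above.
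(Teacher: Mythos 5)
Your proposal has genuine gaps, and it misses the one part of the lemma that this paper actually has to prove. The paper disposes of (i)--(iv) simply by citing \cite[Lemmas 2.1 and 2.2]{ADE23}; the only new content here is (v), and there the argument is short and concrete: by (iii), $X$ is a non-trivial periodic Fitting group, hence locally nilpotent, hence the direct product of its primary components. If more than one prime occurred, $X$ would be the direct product of two proper normal subgroups, each nilpotent by (ii), so $X$ would be nilpotent, contradicting (i) (a non-trivial perfect group is not nilpotent). Your sketch of (v) --- ``extract a proper subgroup exhibiting both primes and exploit permutability together with local gradedness'' --- never reaches a contradiction and does not use the decisive fact that a periodic locally nilpotent group splits as the direct product of its Sylow subgroups; this paragraph needs to be replaced by the direct-product argument.

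As for (i)--(iv): your deduction of (i) from (iv) is correct, and the dichotomy you set up for (iii) (either $X$ is the union of its proper normal subgroups, or it has a maximal one) is workable, but the remaining steps are not proofs. In (iv), the sentence ``analysing the ascending permutability series should push $H$ into a locally nilpotent piece of $X$'' contains no argument, and this is precisely the hard part that the paper avoids by quoting \cite{ADE23}. In (ii), the claim that ``one can derive that $M$ cannot in fact be a proper normal subgroup'' is asserted, not shown. In (iii), the second case invokes ``the well-known fact that no non-abelian simple locally graded group can arise'': this is false as stated (the infinite finitary alternating group is locally finite, hence locally graded, and simple), and moreover local gradedness does not in general pass to quotients, so you cannot even assert that $X/M$ is locally graded without justification. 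If you intend to reprove (i)--(iv) rather than cite the earlier paper, each of these three steps requires an actual argument.
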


\begin{proof}
Parts (i)-(iv)  are established in \cite[Lemmas 2.1 and 2.2]{ADE23} and so it only remains to show that $X$ is $p$-group for some prime $p$. Suppose not. Since $X$ is a non-trivial  periodic Fitting group, it is the direct product of two normal proper subgroups, each of which is nilpotent by part (ii) of this proposition. This implies that $X$ is nilpotent,  contrary to part (i), and the desired result follows. .
\end{proof}

Our remaining lemmas record two pieces of folklore.

\begin{lemma}\label{l:folk1}
 Let $H$ be a subgroup of finite index in a group $L$ and supose that $H$ is residually finite. Then $L$ is residually finite.
\end{lemma}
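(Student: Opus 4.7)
The plan is to produce, for each $1 \neq x \in L$, a normal subgroup of $L$ of finite index that does not contain $x$. First I would replace $H$ by its normal core $K = \bigcap_{g \in L} H^g$. Since $H$ has finite index in $L$, the set of conjugates $\{H^g : g \in L\}$ is finite (its size is $[L : N_L(H)] \leq [L:H]$), so $K$ is the intersection of finitely many subgroups of finite index, and hence itself has finite index in $L$. By construction $K \trianglelefteq L$ and $K \leq H$, so $K$ inherits residual finiteness from $H$ as a subgroup of a residually finite group.

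Now fix $1 \neq x \in L$. If $x \notin K$ then the finite quotient $L/K$ separates $x$ from the identity and we are done. Otherwise $x \in K$, and residual finiteness of $K$ gives a finite-index subgroup of $K$ not containing $x$; replacing it by its normal core in $K$ (equivalently, by the kernel of the action of $K$ on the cosets of that subgroup), we may assume the resulting subgroup $M$ is normal in $K$, of finite index in $K$, and still does not contain $x$.

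Set $N = \bigcap_{g \in L} M^g$. Because $K \trianglelefteq L$ and $M \trianglelefteq K$, for any $k \in K$ and $g \in L$ we have $M^{kg} = g^{-1}(k^{-1}Mk)g = g^{-1}Mg = M^g$, so $M^g$ depends only on the coset $Kg$ in $L$. Hence there are at most $[L:K]$ distinct conjugates of $M$ in $L$, which makes $N$ a finite intersection of finite-index subgroups of $K$; thus $[L:N]$ is finite. By construction $N \trianglelefteq L$ and $N \leq M$, so $x \notin N$, and $L/N$ is a finite quotient in which $x$ has non-trivial image. The only delicate step is the preliminary normalization of $M$ inside $K$: without it, the family of $L$-conjugates of $M$ need not be finite, and the finite-index conclusion for $N$ would fail.
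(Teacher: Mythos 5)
Your argument is correct, and it rests on the same key device as the paper's proof: the core in $L$ of a finite-index subgroup is normal and of finite index in $L$. The paper applies this device just once, directly to a family $\{N_i\}$ of finite-index normal subgroups of $H$ with trivial intersection: the cores $M_i$ of the $N_i$ in $L$ are normal of finite index in $L$ and still intersect trivially. You instead work element by element, and you interpose an extra normalization step, first replacing $H$ by its core $K$ in $L$ and then separating a given $x\neq 1$ either by $L/K$ or by the core in $L$ of a finite-index $M\trianglelefteq K$ avoiding $x$. Both routes are valid; yours is a bit longer but every step checks out. One remark at the end is wrong, however: the preliminary normalization of $M$ inside $K$ is \emph{not} needed to make the family of $L$-conjugates of $M$ finite. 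Any subgroup $M$ of finite index in $L$ satisfies $M\leq N_L(M)$, so $N_L(M)$ has finite index in $L$ and $M$ has at most $[L:N_L(M)]\leq [L:M]$ conjugates; hence its core in $L$ automatically has finite index. Recognizing this would let you collapse your proof to the paper's one-step version.
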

\begin{proof}
Since $H$ is residually finite, there exists a collection of  normal subgroups $\{N_i\}_{i\in I}$ of $H$ such that each $N_i$ has finite index in $H$  and the intersection of all the $N_i$ is trivial. 
For each $i\in I$, let $M_i$ denote the core of $N_i$ in $L$. Then each $M_i$ is normal and of finite index in $L$. Moreover, $M_i\leq N_i$ for each $i\in I$, and so the intersection of all the $M_i$ is trivial. The result follows.
\end{proof}

\begin{lemma} \label{l:folk2} 
 Let $N$ be a nilpotent subgroup of a Fitting group $X$ and suppose that $N$ is both normal and of finite index in a subgroup $K$ of $X$. Then $K$ is nilpotent.
\end{lemma}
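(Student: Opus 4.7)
The strategy is to produce a single nilpotent normal subgroup $M$ of $X$ that contains a full set of coset representatives for $N$ in $K$, and then to exhibit $K$ as a product of two normal nilpotent subgroups of $K$ itself, so that a final application of Fitting's theorem closes the argument.

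First, since $[K:N]$ is finite I would pick finitely many coset representatives $g_1,\dots,g_n$ of $N$ in $K$. Because $X$ is a Fitting group, each $g_i$ lies in some nilpotent normal subgroup $F_i$ of $X$. Fitting's classical theorem---that the product of two normal nilpotent subgroups of any group is again nilpotent---applied inductively shows that $M:=F_1F_2\cdots F_n$ is a nilpotent normal subgroup of $X$. Next, since $M\triangleleft X$ and $K\leq X$, the intersection $M\cap K$ is normal in $K$, and by construction each $g_i$ lies in $M\cap K$. Hence $K=N\langle g_1,\dots,g_n\rangle\subseteq N(M\cap K)\subseteq K$, so $K=N\cdot(M\cap K)$.

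Now $M\cap K$ is a subgroup of the nilpotent group $M$, hence is itself nilpotent; together with the hypothesis that $N\triangleleft K$ is nilpotent, this presents $K$ as a product of two normal nilpotent subgroups of itself, and a last appeal to Fitting's theorem yields that $K$ is nilpotent. The only step that deserves any mention is the passage from the two-factor version of Fitting's theorem to the $n$-factor version used to build $M$, but this is a routine induction on the number of factors; I anticipate no deeper obstacle, since the whole proof rests only on the defining feature of a Fitting group (every element lies in a nilpotent normal subgroup) together with two invocations of Fitting's theorem.
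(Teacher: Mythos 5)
Your proof is correct and follows essentially the same route as the paper's: both use the Fitting property of $X$ to place the coset representatives of $N$ in $K$ inside nilpotent normal subgroups, and then conclude by Fitting's theorem applied inside $K$. The only (cosmetic) difference is that you aggregate these into a single nilpotent normal subgroup $M$ of $X$ and write $K=N\cdot(M\cap K)$ as a product of two normal nilpotent subgroups, whereas the paper expresses $K$ as the join of $N$ with the normal closures in $K$ of the individual representatives.
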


\begin{proof}
Let $x_1,x_2,\dots,x_n$ be a set of coset representatives for $N$ in $K$and note that $K=\langle N,x_1,x_2,\dots,x_n\rangle$. For each $i=1,2,\dots,n$, let $L_i$ denote the normal closure of $x_i$ in $X$ and $M_i$ the normal closure of $x_i$ in $K$. Since $X$ is a Fitting group, each $L_i$ is a normal \emph{nilpotent} subgroup of $X$ and so each $M_i$ is a nilpotent normal subgroup of $K$. It follows that $K=\langle N,M_1,M_2,\dots,M_n\rangle$ is the join of finitely many of its normal nilpotent subgroups and is therefore nilpotent.   
\end{proof}

\begin{proof}[\bf{The Proof of Theorem \ref{t:periodic}}]
Let $G$ be a periodic locally graded group in which every non-nilpotent subgroup is permutable, and suppose, for a contradiction, that $G$ is not soluble. Let $X=G''$ and note that $X$ has all the properties listed in Lemma \ref{l:omni}. Since $X$ is locally nilpotent and perfect, it  follows that $X$ is infinite. Moreover, if all proper subgroups of $X$ are nilpotent, a celebrated result of Asar \cite{aA99} implies that $X$ is soluble and this contradiction shows that $X$ has a proper non-nilpotent subgroup, $P$ say, which is necessarily a permutable subgroup of $X$. Let $P_X$ denote the core of $P$ in $X$.  Lemma \ref{l:omni} shows that $P_X$ is soluble and we let $d$ denote its derived length.  Clearly $P/P_X$ is a core-free permutable subgroup of $X/P_X$, and so a result of Stonehewer \cite[Theorem C]{sS72} now implies that $P/P_X$ is residually finite. Let $F$ be a finitely generated subgroup of $X$; of course, $F$ is finite. Since $P$ is a permutable subgroup of $X$ it follows that $\langle F,P\rangle=FP$ and so $P$ has finite index in $FP$. Consequently $P/P_X$ has finite index in $FP/P_X$, and we  deduce from Lemma \ref{l:folk1} that $FP/P_X$ is residually finite. 
 Let $M/P_X$ be a normal subgroup of finite index in $FP/P_X$. If $M$ is nilpotent, then $FP$ is nilpotent-by-finite and therefore nilpotent by Lemma \ref{l:folk2}. Since $P$ is non-nilpotent we deduce that  $M$ is not nilpotent and so every subgroup of $FP$ containing $M$ is a permutable subgroup of $X$. Thus $FP/M$ is quasihamiltonian (i.e. all of its subgroups are permutable) and it follows that $FP/M$ is metabelian. Thus $FP/P_X$ is residually metabelian and hence metabelian. In particular $FP_X/P_X$ is metabelian so $F$ is soluble of derived length at most $d+2$.  It follows that $X$ is locally (soluble of derived length at most $d+2$) and therefore  soluble of derived length at most $d+2$. This contradiction completes the proof.
\end{proof}


\end{document}